\newtheorem{theorem}{Theorem}[section]
\newtheorem{prop}[theorem]{Proposition}
\theoremstyle{definition}
\newtheorem{dfn}[theorem]{Definition}
\newtheorem{rem}[theorem]{Remark}
\begin{document}
\title{Existence of flows for linear Fokker-Planck-Kolmogorov equations and its connection to well-posedness}

\author{Marco Rehmeier\footnote{Faculty of Mathematics, Bielefeld University, 33615 Bielefeld, Germany. E-Mail: mrehmeier@math.uni-bielefeld.de }}

\date{}
\maketitle
\begin{abstract}
	\noindent Let the coefficients $a_{ij}$ and $b_i$, $i,j \leq d$, of the linear Fokker-Planck-Kolmogorov equation (FPK-eq.)
	$$\partial_t\mu_t = \partial_i\partial_j(a_{ij}\mu_t)-\partial_i(b_i\mu_t)$$ be Borel measurable, bounded and continuous in space. Assume that for every $s \in [0,T]$ and every Borel probability measure $\nu$ on $\mathbb{R}^d$ there is at least one solution $\mu = (\mu_t)_{t \in [s,T]}$ to the FPK-eq. such that $\mu_s = \nu$ and $t \mapsto \mu_t$ is continuous w.r.t. the topology of weak convergence of measures. We prove that in this situation, one can always select one solution $\mu^{s,\nu}$ for each pair $(s,\nu)$ such that this family of solutions fulfills 
	$$\mu^{s,\nu}_t = \mu^{r,\mu^{s,\nu}_r}_t \text{ for all }0 \leq s \leq r \leq t \leq T,$$which one interprets as a \textit{flow property} of this solution family. Moreover, we prove that such a flow of solutions is unqiue if and only if the FPK-eq. is well-posed.
\end{abstract}

\noindent	\textbf{Keywords:} Fokker-Planck-Kolmogorov equation, flow property, martingale problem, superposition principle\\ \\
	\textbf{2010 MSC}: 60J60; 35Q84
	
\section{Introduction}
In this paper we are concerned with linear Fokker-Planck-Kolmogorov equations of the form
\begin{equation}\label{FPK-eq.}
\partial_t\mu_t = \partial_i\partial_j(a_{ij}\mu_t)-\partial_i(b_i\mu_t),
\end{equation}which are second order parabolic equations for measures. Here $a_{ij},b_i : [0,T]\times \mathbb{R}^d \to \mathbb{R}$ are given coefficients with suitable measurability- and regularity-assumptions imposed below. For $s \in [0,T]$ and a Borel probability measure $\nu$ on $\mathbb{R}^d$, we consider the Cauchy problem of (\ref{FPK-eq.}) with initial condition $\mu_s = \nu$. Our notion of a solution to such a Cauchy problem is that of narrowly continuous probability curves, i.e. a family of Borel probability measures $(\mu_t)_{t \in [s,T]}$ such that $[s,T]\ni t \mapsto \mu_t$ is weakly continuous and 
\begin{equation*}
\int_{\mathbb{R}^d}fd\mu_t - \int_{\mathbb{R}^d}fd\nu = \int_{s}^{t}\int_{\mathbb{R}^d}L_uf(x)d\mu_u(x)du
\end{equation*} holds for all smooth, compactly supported $f: \mathbb{R}^d \to \mathbb{R}$. A shorthand notation for equation (\ref{FPK-eq.}) is $$\begin{cases}
\partial_t\mu_t&= L^*\mu_t \\
\mu_s &= \nu
\end{cases},$$ where $L_t(x) := L_{A,b}(t,x):= \frac{1}{2}a_{ij}(t,x)\partial_i\partial_j+b_i(t,x)\partial_i$ denotes the second order differential generator associated to $A:= (a_{ij})_{i,j \leq d}$ and $b:= (b_i)_{i \leq d}$, also called Kolmogorov operator.\\
\\Fokker-Planck-Kolmogorov equations have been an active research topic in the past decades. There is a vast literature on general results such as existence, uniqueness and regularity of solutions, also for a more general notion of equations as (\ref{FPK-eq.}). A thorough analytical introduction into the field is provided by the work \cite{bogachev2015fokker} of Röckner, Krylov, Bogachev and Shaposhnikov from 2015.\\ \\Fokker-Planck-Kolmogorov equations also have strong and fruitful connections to probability theory, in particular to the theory of diffusion processes. For example, the transition probabilities of a typical diffusion process in $\mathbb{R}^d$ with drift $b=(b_i)_{i \leq d}$ and diffusion coefficients $(a_{ij})_{i,j \leq d}$ solve the corresponding Fokker-Planck-Kolmogorov equation, i.e. equation (\ref{FPK-eq.}). \\Above that, equation (\ref{FPK-eq.}) is closely related to the martingale problem associated to the coefficients $a_{ij}$ and $b_i$. More precisely, every continuous solution to the martingale problem provides a narrowly continuous probability solution to  equation (\ref{FPK-eq.}) via its one-dimensional marginals. Conversely, by a so-called \textit{superposition principle} of Trevisan from 2016, which is an extension of an earlier work by Figalli from 2008, c.f. \cite{trevisan2016} and \cite{FIGALLI2008109}, respectively, given a narrowly continuous probability solution $(\mu_t)_{t \in [s,T]}$ to equation (\ref{FPK-eq.}), there exists a continuous solution to the corresponding martingale problem, for which the one-dimensional marginals are given by $(\mu_t)_{t \in [s,T]}$. A fundamental investigation of martingale problems and its connection to Fokker-Planck-Kolmogorov equations can be found in \cite{stroock2007multidimensional} by Stroock and Varadhan from 2006.\\
\\In particular, in this work Stroock and Varadhan prove the following: Given continuous and bounded coefficients $a_{ij}$ and $b_i$, assume there exists at least one continuous solution to the martingale problem with start in $x \in \mathbb{R}^d$ at time $s \geq 0$ for every pair $(s,x)$. Then there exists a strong Markovian selection of such solutions. More precisely, one can select a solution to the martingale problem $P^{s,x}$ for every initial condition $(s,x)$ such that the family $(P^{s,x})_{(s,x) \in \mathbb{R}_+ \times \mathbb{R}^d}$ is a strong Markov process on the space of continuous functions $C(\mathbb{R}_+,\mathbb{R}^d)$. Such a consideration goes back to an earlier work of Krylov from 1973 (c.f. \cite{Krylov_1973}). It is also proven that such a selection is unique if and only if the martingale problem is well-posed.\\
\\The aim of this paper is to prove similar results for the Fokker-Planck-Kolmogorov equation (\ref{FPK-eq.}). Our first main result (see Theorem \ref{Main Thm}) is the following: Assume all coefficients $a_{ij}$ and $b_i$ are Borel measurable, globally bounded and continuous in the spatial variable. Assuming that the Cauchy problem for equation (\ref{FPK-eq.})  has at least one narrowly continuous probability solution for every initial condition $(s,\nu)$, we prove the existence of a family of solutions $(\mu^{s,\nu})_{s, \nu}$ such that \begin{equation}\label{Flow}
\mu^{s,\nu}_t = \mu^{r,\mu^{s,\nu}_r}_t \text{ for all }0 \leq s \leq r \leq t \leq T
\end{equation} and all probability measures $\nu$. We regard (\ref{Flow}) as a flow property for solutions to (\ref{FPK-eq.}). Moreover, in Theorem \ref{second main thm} we show: There exists exactly one such flow if and only if the Fokker-Planck-Kolmogorov equation is well-posed among narrowly continuous probability solutions.\\
\\The structure of this paper is as follows: In Section 2 we introduce notation,  present the exact notion of solution to the Cauchy problem of equation (\ref{FPK-eq.}) and state the assumptions on the coefficients $a_{ij}$ and $b_i$. In the third section, we present our two main results, which are Theorem \ref{Main Thm} and \ref{second main thm}. We set up all necessary notions and tools for its proofs. In particular, this includes the aforementioned superposition principle by Figalli and Trevisan. Afterwards we prove both main theorems.\\
\\It would be interesting to generalize our results to Fokker-Planck-Kolmogorov equations on infinite dimensional state spaces, e.g. replacing $\mathbb{R}^d$ by a separable Hilbert space $H$. The techniques we developed within the proof of Theorem \ref{Main Thm} seem promising for this more general case as well. In order to widen the spectrum of possible applications, it is also desirable to establish our main results under more general assumptions on the coefficients $a_{ij}$ and $b_i$. This could be a direction of further research on this topic.

\section{Preliminaries}
\subsection{Notation}
Let us introduce basic notation, which we will frequently use in the sequel.
\\For a metric space $X$, $\mathcal{P}(X)$ denotes the set of all Borel probability measures on $X$. If $X= \mathbb{R}^d$, we will simply write $\mathcal{P}:= \mathcal{P}(\mathbb{R}^d)$.\\ $C_b(\mathbb{R}^d)$ is the set of all bounded and continuous functions $f: \mathbb{R}^d\to \mathbb{R}$ and $C^{\infty}_{0}(\mathbb{R}^d)$ the set of all such $f$, which are smooth and have compact support. For open sets $I \subseteq \mathbb{R}$ and $ \Omega \subseteq \mathbb{R}^d$, $C_b^{1,2}(I\times\Omega)$ [$C_c^{1,2}(I\times\Omega)$] denotes all bounded [compactly supported] functions $f: I \times \Omega \to \mathbb{R}$, which have at least one bounded continuous derivative w.r.t. $t\in I$ and at least two bounded continuous derviatives w.r.t. $x \in \Omega$. The spatial derivative in the $i$-th euclidean direction for such a function is denoted by $\partial_if$, $i \leq d$, and the derivative w.r.t. the time-variable by $\partial_tf$. As usual, for two topological spaces $X$ and $Y$, $C(X,Y)$ denotes the set of all continuous functions $f: X \to Y$.\\ For a  time interval $I \subseteq \mathbb{R}_+$, a $\textit{Borel curve}$ of Borel (probability) measures on $\mathbb{R}^d$ is a family $(\mu_t)_{t \in I}$ such that $t \mapsto \mu_t(A)$ is Borel measurable for every $A \in \mathcal{B}(\mathbb{R}^d)$. We call such a Borel curve \textit{narrowly continuous}, if $t \mapsto \int fd\mu_t$ is continuous for all $f \in C_b(\mathbb{R}^d)$, i.e. in other words, if the map $t \mapsto \mu_t$ is continuous w.r.t. the topology of weak convergence of measures on $\mathcal{P}$. \\The one-dimensional Lebesgue measure on $\mathbb{R}$ or on an interval $I \subseteq \mathbb{R}$ is denoted by $dt$. The set of all symmetric, non-negative definit $d\times d$-matrices with real entries is denoted by $S_+(\mathbb{R}^d)$.
\subsection{Basic Setting}
Let  $T >0$, which we regard as fixed throughout this paper, and let $$a_{ij}, b_i:[0,T] \times \mathbb{R}^d \to \mathbb{R} \text{ for } i,j \in \{1,...,d\}$$ fulfill the following assumptions: $a_{ij}$ and $b_i$ are Borel measurable, continuous in $x \in \mathbb{R}^d$ and globally bounded such that $A(t,x) :=(a_{ij}(t,x))_{i,j \leq d} \in S_+(\mathbb{R}^d)$ for every $(t,x)\in [0,T]\times \mathbb{R}^{d}$. These assumptions will be in force throughout the entire paper.\\ \\Further let $L:=L_{A,b}$ be the operator defined through $L_tf(t,x) := \frac{1}{2}a_{ij}(t,x)\partial_i\partial_jf(t,x)+b_i(t,x)\partial_if(t,x)$ for every $f: I \times \mathbb{R}^d \to \mathbb{R}$ with at least two spatial derivatives. We always assume summation over repeated indices. 

\begin{dfn}\label{Def sol FPK}
	A \textit{narrowly continuous probability solution to the Cauchy problem for the Fokker-Planck-Kolmogorov equation} (\textit{FPK-eq.}) \textit{w.r.t.} 	$a_{ij},b_i$ \textit{on} $[0,T]\times \mathbb{R}^d$ \textit{with initial condition} $(s,\nu) \in [0,T]\times \mathcal{P}$, i.e. to
	$$\begin{cases} 
	\partial_t\mu_t &= \partial_i\partial_j(a_{ij}\mu_t)-\partial_i(b_i\mu_t)	\\
	\mu_s &= \nu
	\end{cases},$$ sometimes shortly written as
	$$\begin{cases} 
	\partial_t\mu_t &= L^*\mu_t	\\
	\mu_s &= \nu
	\end{cases},$$
	is a narrowly continuous Borel curve of probability measures $\mu = (\mu_t)_{t \in [s,T]} \in C\big([s,T],\mathcal{P}\big)$ such that 
	\begin{equation*}
	\int_{s}^{T}\int_{\mathbb{R}^d}\partial_tf(t,x)+L_tf(t,x)d\mu_t(x)dt = 0
	\end{equation*} for all $f \in C^{1,2}_c\big((s,T)\times \mathbb{R}^d\big)$. Equivalently, we may require
	\begin{equation}
	\int_{\mathbb{R}^d}fd\mu_t - \int_{\mathbb{R}^d}fd\nu = \int_{s}^{t}\int_{\mathbb{R}^d}L_uf(x)d\mu_u(x)du
	\end{equation}for all $f \in C^{\infty}_0(\mathbb{R}^d)$ and all $t \in [s,T]$ . In particular $\mu_s = \nu$.
\end{dfn}
\begin{dfn}
	The set of all narrowly continuous probability solutions to the above FPK-eq. with initial condition $(s,\nu)$ is denoted by $FP(L,s,\nu)$. Since we fix $L= L_{A,b}$ throughout, no  confusion will occur when we write $FP(s,\nu)$ instead of $FP(L,s,\nu)$.
\end{dfn}

\section{The Main Results}
\begin{dfn}\label{Def flow prop}
	Let $\gamma^{s,\nu} = (\gamma^{s,\nu}_t)_{t \in [s,T]} \in FP(s,\nu)$ for every $(s,\nu) \in [0,T] \times \mathcal{P}$. We say that the family $(\gamma^{s,\nu})_{(s,\nu) \in [0,T]\times \mathcal{P}}$ has the \textit{flow property}, if for every $0 \leq s \leq r \leq t \leq T$ and $\nu \in \mathcal{P}$ we have \begin{equation}\label{flow-prop}
	\gamma^{s,\nu}_t = \gamma^{r,\gamma^{s,\nu}_r}_t.
	\end{equation}
\end{dfn}
Our first main result is the following
\begin{theorem}\label{Main Thm}
	Assume $a_{ij}$ and $b_i$ fulfill the measurability- and regularity-conditions imposed above Definition \ref{Def sol FPK}. Further assume that $FP(s,\nu)$ is non-empty for every $(s,\nu)$, i.e. for every initial condition $(s,\nu) \in [0,T]\times \mathcal{P}$ there exists at least one narrowly continuous probability solution to the above FPK-equation with start in $(s,\nu)$. Then there exists a family $(\mu^{s,\nu})_{(s,\nu)\in [0,T]\times \mathcal{P}}$, which has the flow property.
\end{theorem}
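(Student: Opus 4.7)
My plan is to lift the problem from the level of FPK solutions to the level of martingale problem solutions, where a strong Markov selection theorem is available, and then project back down via one-dimensional marginals. Write $\mathcal{M}(s,\nu)$ for the set of laws on $C([s,T],\mathbb{R}^d)$ solving the martingale problem for $L_{A,b}$ with initial distribution $\nu$ at time $s$. By the Figalli--Trevisan superposition principle, the assumption $FP(s,\delta_x)\neq\emptyset$ for each $(s,x)\in[0,T]\times\mathbb{R}^d$ immediately yields $\mathcal{M}(s,x):=\mathcal{M}(s,\delta_x)\neq\emptyset$.

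Next, I would invoke the Stroock--Varadhan strong Markov selection theorem (their book, Chapter 12): the standing hypotheses on $a_{ij},b_i$ -- Borel measurable, bounded and continuous in $x$ -- give exactly the compactness and upper hemicontinuity of $(s,x)\mapsto \mathcal{M}(s,x)$ that are needed in the selection argument. This produces a jointly measurable family $(P^{s,x})_{(s,x)\in[0,T]\times\mathbb{R}^d}$ with $P^{s,x}\in\mathcal{M}(s,x)$ and the Markov property
$$E^{P^{s,x}}\bigl[\,f(\omega_t)\mid\mathcal{F}_r\bigr](\omega)=E^{P^{r,\omega_r}}[f(\omega_t)] \quad P^{s,x}\text{-a.s., for }0\le s\le r\le t\le T,\ f\in C_b(\mathbb{R}^d).$$
For a general $\nu\in\mathcal{P}$, the measurability of $x\mapsto P^{s,x}$ permits me to set $P^{s,\nu}:=\int_{\mathbb{R}^d}P^{s,x}\,d\nu(x)$, which lies in $\mathcal{M}(s,\nu)$ because the defining martingale property is linear in the law. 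I then define $\mu^{s,\nu}_t:=P^{s,\nu}\circ\pi_t^{-1}$. A standard calculation (testing the martingale property against $f\in C_0^\infty(\mathbb{R}^d)$ and taking expectations) shows $\mu^{s,\nu}\in FP(s,\nu)$, while narrow continuity of $t\mapsto\mu^{s,\nu}_t$ follows from the sample-path continuity of $\omega$ under $P^{s,\nu}$ via dominated convergence.

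The flow identity is then a direct consequence of the Markov property. For $\nu=\delta_x$, taking expectations in the Markov identity gives
$$\int f\,d\mu^{s,x}_t=\int\!\int f\,d\mu^{r,y}_t\,d\mu^{s,x}_r(y)=\int f\,d\mu^{r,\mu^{s,x}_r}_t,\qquad f\in C_b(\mathbb{R}^d),$$
so $\mu^{s,x}_t=\mu^{r,\mu^{s,x}_r}_t$. Since by construction $\mu^{s,\nu}_t=\int\mu^{s,x}_t\,d\nu(x)$ and $\mu^{s,\nu}_r=\int\mu^{s,x}_r\,d\nu(x)$, integrating the point-mass identity against $\nu$ and applying Fubini yields (\ref{flow-prop}) for general $\nu$.

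The main obstacle I foresee is the rigorous invocation of the Stroock--Varadhan selection in the present time-inhomogeneous, finite-horizon setting: the original theorem is stated for time-homogeneous martingale problems on $C(\mathbb{R}_+,\mathbb{R}^d)$, so one must either cite its extension to time-dependent coefficients or pass to the space--time process $(t,X_t)$ on $\mathbb{R}\times\mathbb{R}^d$ and translate the conclusion back. In either case, verifying the compactness of $\mathcal{M}(s,x)$ and the measurable-selection prerequisites under merely Borel-in-$t$ coefficients, together with the joint measurability of $(s,x)\mapsto P^{s,x}$ -- which is essential to even define the mixture $P^{s,\nu}$ -- is the technical heart of the proof. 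Everything else is bookkeeping driven by the Markov property and the superposition principle.
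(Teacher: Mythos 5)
Your proposal takes a genuinely different route from the paper, and it has a gap that you flag but do not close. The paper does \emph{not} invoke the Stroock--Varadhan strong Markov selection theorem. It constructs the selection directly at the level of $FP(s,\nu)$: fix a countable measure-determining family $\{f_n\}\subseteq C_b(\mathbb{R}^d)$ and a fixed enumeration of $\mathbb{N}\times\mathbb{Q}^T_0$, then iteratively maximize $\mu\mapsto\int f_{n_k}\,d\mu_{q_k}$ over the nested compact subsets of $J_s\big(FP(s,\nu)\big)\subseteq\mathcal{P}^{\mathbb{Q}^T_s}$, using the compactness of $MP(s,\nu)$ (Proposition \ref{S.V.}) pushed through $\Lambda$ and the superposition principle to get compactness of $FP(s,\nu)$. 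The intersection of the nested sets is a single marginal curve, hence a single $\mu^{s,\nu}\in FP(s,\nu)$. The flow property is then checked by a purely combinatorial compatibility argument: the concatenation $\zeta$ of $\mu^{s,\nu}$ on $[s,r]$ with $\mu^{r,\mu^{s,\nu}_r}$ on $[r,T]$ lies in $FP(s,\nu)$, and because the enumeration of $\mathbb{N}\times\mathbb{Q}^T_r$ is a subsequence of that of $\mathbb{N}\times\mathbb{Q}^T_s$ (Remark \ref{enumeration subsequence}), the iterative maximality of $\mu^{s,\nu}$ and of $\mu^{r,\mu^{s,\nu}_r}$ forces $\mu^{s,\nu}_q=\zeta_q$ for all rational $q$, hence $\mu^{s,\nu}_t=\mu^{r,\mu^{s,\nu}_r}_t$ on $[r,T]$.

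The obstacle you yourself identify is in fact substantive under the paper's hypotheses and is precisely what the direct construction circumvents. The paper assumes only Borel measurability in $t$ together with continuity in $x$, and it explicitly notes that Lemma 12.2.1 in Stroock--Varadhan treats time-independent coefficients. Passing to the space--time process $(t,X_t)$ to restore time-homogeneity would require joint continuity of $a_{ij},b_i$ in $(t,x)$, which is not assumed here, so the Markov selection theorem is not available ``off the shelf.'' Moreover, your construction for non-Dirac $\nu$ hinges on measurability of $(s,x)\mapsto P^{s,x}$ just to define the mixture $P^{s,\nu}=\int P^{s,x}\,d\nu(x)$; this is an additional regularity burden. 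The paper's argument never needs any measurability or semicontinuity of the selection in $(s,\nu)$: the flow identity is a pointwise statement for each fixed $(s,\nu,r,t)$, and the enumeration-subsequence trick delivers it without ever comparing selections across different initial data. If the coefficients were jointly continuous in $(t,x)$ your plan would be a clean alternative, but as stated it does not cover the generality claimed by Theorem \ref{Main Thm}.
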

For the proof, we need several preparations. \\ \\For $0\leq s \leq T < +\infty$, we set $\mathbb{Q}_s^T:= [s,T]\cap \mathbb{Q}$.
Let $\mathcal{P}^{\mathbb{Q}^T_s} := \big\{(\gamma_q)_{q \in \mathbb{Q}^T_s}|\gamma_q \in \mathcal{P}\big\}$ be endowed with the product topology of weak convergence of probability measures. Note that this space is Polish, since the topology of weak convergence on $\mathcal{P}$ is metrizable by the Prohorov metric.\\
Moreover, the space $C([s,T],\mathbb{R}^d)$ will always be endowed with the norm of uniform convergence.

\begin{dfn}\label{Def measure det}
	Let $I$ be some index set. A family of measurable functions $\{f_i\}_{i \in I}$, $f_i : \mathbb{R}^d\to \mathbb{R}$, is called \textit{measure-determining (for finite Borel measures) on} $\mathbb{R}^d$, if for any two finite Borel measures $\mu_1,\mu_2$ on $\mathbb{R}^d$, $\int f_id\mu_1 = \int f_i d\mu_2$ for all $i \in I$ implies $\mu_1 = \mu_2$. 
\end{dfn}
\begin{rem}
	Clearly every dense, countable subset $\{f_n\}_{n \in \mathbb{N}}$ of $C^{\infty}_0(\mathbb{R}^d)$ is such a measure-determining family.
\end{rem}
\begin{dfn}\label{Def enumeration}
	\begin{enumerate}
		\item [(i)] 	Any bijection $\eta: \mathbb{N}\times \mathbb{Q}^T_0 \to \mathbb{N}_0$ will be called an \textit{enumeration}. Given an enumeration $\eta$, the $k$-th element in $\mathbb{N}\times \mathbb{Q}^T_0$ according to $\eta$ is denoted by $(n_k,q_k)$.
		\item[(ii)]  For $s \in [0,T]$, we denote by $(m^s_k)_{k \in \mathbb{N}_0}$ the enumerating sequence of $\mathbb{N}\times \mathbb{Q}^T_s$ according to $\eta$, i.e. $\eta^{-1}(m^s_k)$ is the $k$-th element in $\mathbb{N}\times \mathbb{Q}^T_s$ according to $\eta$.
	\end{enumerate}
\end{dfn}
\begin{rem}\label{enumeration subsequence}
	Note that for $0 \leq s \leq r \leq T$, the sequence $(m^r_l)_{l \in \mathbb{N}_0}$ is a subsequence of $(m^s_l)_{l \in \mathbb{N}_0}$. This will be important when we verify the flow-property in the proof of Theorem \ref{Main Thm}.
\end{rem}
\begin{dfn}\label{Def J}
	For $s \in [0,T]$ let $J_s: C\big([s,T], \mathcal{P}\big) \to \mathcal{P}^{\mathbb{Q}^T_s}$ be defined by
	\begin{equation*}
	J_s\big((\gamma_t)_{t \in [s,T]}\big) := (\gamma_q)_{q \in \mathbb{Q}^T_s},
	\end{equation*}i.e. it is simply the projection on all coordinates $q \in \mathbb{Q}^T_s$.
\end{dfn}
\begin{rem}\label{Cont J}
	In the sequel we will always consider $C\big([s,T],\mathcal{P}\big)$ with the product topology of weak convergence of probability measures and not, as common, with the topoloy of uniform convergence. Then $J_s$ as in Definition \ref{Def J} is clearly continuous for every $s \in [0,T]$. However, note that $C\big([s,T],\mathcal{P}\big)$ endowed with the product topology of weak convergence of measures is not metrizable.
\end{rem}The next remark points out an important technique of the proof of Theorem \ref{Main Thm}.
\begin{rem}
	By definition $FP(s,\nu) \subseteq C\big([s,T],\mathcal{P}\big)$ for all $s \in [0,T]$ and $\nu \in \mathcal{P}$. Since $\mathcal{P}$ is Polish and $[s,T]$ is compact, elements in $C\big([s,T],\mathcal{P}\big)$ are even uniformly continuous and in particular every element in  $FP(s,\nu)$ is uniquely determined by its values in a countable, dense subset of $[s,T]$. More precisely, for $(\gamma_q)_{q \in \mathbb{Q}^T_s} \in \mathcal{P}^{\mathbb{Q}^T_s},$ there is at most one $(\bar{\gamma}_t)_{t \in [s,T]} \in FP(s,\nu)$ such that $\bar{\gamma}_q=\gamma_q$ for all $q \in \mathbb{Q}^T_s$. The advantage of this consideration is that $\mathcal{P}^{\mathbb{Q}^T_s}$ is, in contrast to $C\big([s,T],\mathcal{P}\big)$, metrizable and hence continuity of a map $f: \mathcal{P}^{\mathbb{Q}^T_s} \to \mathbb{R}$ is equivalent to sequential continuity.
\end{rem}
Before we turn to the proof of Theorem \ref{Main Thm}, we need to introduce a few additional important results about Fokker-Planck-Kolmogorov equations and their connection to the corresponding martingale problem. We start by recalling the definition of a solution to the martingale problem associated to the given coefficients $a_{ij}$ and $b_i$.
\begin{dfn}\label{Def MP}
	Let $L=L_{A,b}$ denote the differential operator associated to the given coefficients $a_{ij}$ and $b_i$ from above. A \textit{(continuous) solution to the martingale problem associated to} $L$ with initial condition $(s,\nu) \in [0,T]\times\mathcal{P}$ is a measure $P \in \mathcal{P}\big(C([s,T],\mathbb{R}^d)\big)$ such that
	\begin{enumerate}
		\item [(i)] $P\circ\pi_s^{-1} = \nu$
		\item[(ii)] For every $\phi \in C^{1,2}_b\big((s,T)\times\mathbb{R}^d\big)$, the process $[s,T]\ni t \mapsto \phi(t,\pi_t)-\int_{s}^{t}\partial_u\phi(u,\cdot)\circ \pi_u+L_u\phi(u,\pi_u)du$ is a real-valued $P$-martingale on $C\big([s,T],\mathbb{R}^d\big)$ w.r.t. to the natural filtration $(\mathcal{F}_t)_{t \in [s,T]}$ on $C\big([s,T],\mathbb{R}^d\big)$, i.e. $\mathcal{F}_t := \sigma(\pi_u|u \in [s,t])$ for all $t \in [s,T]$.
	\end{enumerate}
\end{dfn}Here and below $\pi_t: C\big([s,T],\mathbb{R}^d\big) \to \mathbb{R}^d$ denotes the canoncial projection at time $t \in [s,T]$.
\begin{prop}\label{MP to FPK}
	Let $P \in \mathcal{P}\big(C([s,T],\mathbb{R}^d)\big)$ be a solution to the martingale problem associated to $L$ with initial condition $(s,\nu)$. Then $(P\circ \pi_t^{-1})_{t \in [s,T]} \in FP(s,\nu)$.
\end{prop}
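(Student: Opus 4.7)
The plan is to define $\mu_t := P \circ \pi_t^{-1}$ for $t \in [s,T]$ and verify the three requirements for membership in $FP(s,\nu)$: the initial condition $\mu_s = \nu$, narrow continuity of $t \mapsto \mu_t$, and the integral identity against $f \in C_0^\infty(\mathbb{R}^d)$. The first follows immediately from (i) of Definition \ref{Def MP}.

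For narrow continuity, fix $g \in C_b(\mathbb{R}^d)$ and $t_n \to t$ in $[s,T]$. Because $P$ is concentrated on $C([s,T],\mathbb{R}^d)$, the random variables $g(\pi_{t_n})$ converge to $g(\pi_t)$ pointwise $P$-almost surely, and they are uniformly bounded by $\|g\|_\infty$. Dominated convergence then yields
$$\int g\, d\mu_{t_n} = E_P\bigl[g(\pi_{t_n})\bigr] \longrightarrow E_P\bigl[g(\pi_t)\bigr] = \int g\, d\mu_t.$$

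For the FPK-identity, the idea is to feed time-independent test functions into the martingale property. Given $f \in C_0^\infty(\mathbb{R}^d)$, set $\phi(u,x) := f(x)$. Since $f$ is smooth with compact support, $\phi \in C_b^{1,2}((s,T)\times \mathbb{R}^d)$ with $\partial_u\phi \equiv 0$, so condition (ii) of Definition \ref{Def MP} applies and asserts that
$$M_t := f(\pi_t) - \int_s^t L_u f(\pi_u)\, du$$
is a real-valued $P$-martingale on $[s,T]$. Taking expectations and using $E_P[M_t] = E_P[M_s] = E_P[f(\pi_s)]$, together with boundedness of $L_u f$ (which uses boundedness of $a_{ij}, b_i$ and compact support of $f$), Fubini's theorem gives
$$\int_{\mathbb{R}^d} f\, d\mu_t - \int_{\mathbb{R}^d} f\, d\nu = E_P\!\left[\int_s^t L_u f(\pi_u)\, du\right] = \int_s^t \int_{\mathbb{R}^d} L_u f(x)\, d\mu_u(x)\, du,$$
which is precisely the identity in Definition \ref{Def sol FPK}.

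There is no genuine obstacle here; the only thing worth checking carefully is that the constant-in-time extension $\phi(u,x) = f(x)$ qualifies as a test function in the martingale problem, i.e.\ lies in $C_b^{1,2}((s,T)\times\mathbb{R}^d)$, which is immediate. The remaining application of Fubini requires joint measurability of $(u,\omega) \mapsto L_u f(\pi_u(\omega))$ and a uniform bound, both of which follow from the standing assumptions on $a_{ij}, b_i$ and the choice of $f$.
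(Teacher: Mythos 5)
Your proof is correct and follows essentially the same route as the paper: verify the initial condition from Definition \ref{Def MP}(i), obtain narrow continuity from the pathwise continuity of $t \mapsto \pi_t(\omega)$ together with dominated convergence, and derive the FPK-identity by feeding the time-independent test function $\phi(u,x)=f(x)$ into the martingale property, taking expectations, and applying Fubini. The paper states these steps more tersely (``by integration with $P$, Fubini's theorem, a change of variables for image measures''), but the underlying argument is identical; your version simply spells out the details more carefully.
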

\begin{proof}
	Obviously $(P \circ \pi_t^{-1})_{t \in [s,T]}$ is a Borel curve of probability measures. Due to the continuity of the canoncial projections $\pi_t$ for $t \in [s,T]$, $(P\circ \pi_t^{-1})_{t \in [s,T]}$ is clearly narrowly continuous. Using the martingale property of Definition \ref{Def MP} (ii), we obtain by integration with $P$, Fubini's theorem, a change of variables for image measures and (i) of the previous definition:
	$$\int_{\mathbb{R}^d} \phi dP_t - \int_{s}^{t} \int_{\mathbb{R}^d} L_u\phi dP_u du = \int_{\mathbb{R}^d} \phi dP_s = \int_{\mathbb{R}^d}\phi d\nu$$ for every $\phi \in C^{\infty}_0(\mathbb{R}^d)$, where we abbreviated $P \circ \pi_t^{-1}$ by $P_t$.
\end{proof} The following theorem by Trevisan gives sort of an inverse of the above proposition. We point out that Trevisan's result (c.f. Theorem 2.5 in \cite{trevisan2016}), which is an extension of an earlier work by Figalli (c.f. \cite{FIGALLI2008109}), does not require any continuity or boundedness of the coefficients (instead of the latter, in \cite{trevisan2016} global integrability of $a_{ij}$ and $b_i$ against $d\gamma_t(x)dt$ over $[s,T]\times \mathbb{R}^d$ is required for any solution $\gamma$).
\begin{theorem}\label{Superposition}(Superposition principle by Trevisan/Figalli) Let $(\gamma_t)_{t \in [s,T]} \in FP(s,\nu)$. Then there exists a solution $P \in \mathcal{P}\big(C([s,T],\mathbb{R}^d)\big)$ to the martingale problem associated to $L$ with start in $(s,\nu)$ such that $P\circ \pi_t^{-1} = \gamma_t$ for all $t \in [s,T]$.
	
\end{theorem}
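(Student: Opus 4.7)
The proof I would give is a mollification-plus-compactness argument in the spirit of Figalli and Trevisan. The plan is to smooth $(\gamma_t)$ in the spatial variable, invoke classical SDE theory on the resulting regularized FPK equation to obtain martingale-problem solutions $P^\epsilon$ on path space, and then pass to a weak limit on $C([s,T],\mathbb{R}^d)$.

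Concretely, fix a standard mollifier $\rho_\epsilon$ and set $\gamma^\epsilon_t := \gamma_t \ast \rho_\epsilon$, whose density $u^\epsilon(t,\cdot)$ is smooth and strictly positive. A short computation from the FPK equation for $(\gamma_t)$ shows that $u^\epsilon$ satisfies a regularized FPK equation with coefficients $a^\epsilon_{ij}(t,x) := ((a_{ij}\gamma_t)\ast\rho_\epsilon)(x)/u^\epsilon(t,x)$ and similarly $b^\epsilon_i$, which are smooth in $x$ and uniformly bounded in $\epsilon$ thanks to boundedness of $a_{ij}, b_i$. Classical SDE well-posedness then supplies a weak solution whose law $P^\epsilon \in \mathcal{P}(C([s,T],\mathbb{R}^d))$ solves the martingale problem for $L^\epsilon := L_{A^\epsilon,b^\epsilon}$ with one-dimensional marginals $\gamma^\epsilon_t$. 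Uniform boundedness of the coefficients yields a Kolmogorov-type estimate $E_{P^\epsilon}[|\pi_t-\pi_r|^4]\leq C|t-r|^2$, hence tightness of $\{P^\epsilon\}$; extract a weakly convergent subsequence $P^{\epsilon_n} \to P$.

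Identifying $P$ is where the main difficulty lies. The marginal identity $P\circ\pi_t^{-1}=\gamma_t$ is immediate from weak convergence combined with $\gamma^{\epsilon_n}_t \to \gamma_t$ and the continuity of the map $\mu\mapsto \mu\circ\pi_t^{-1}$. The hard part is passing to the limit in the martingale identity, since $L^{\epsilon_n}_u\phi$ depends nonlinearly on $u^{\epsilon_n}$ and does not converge pointwise to $L_u\phi$. The key observation, encoded in the very definition of $a^\epsilon,b^\epsilon$, is the Fubini identity $\int L^\epsilon_u\phi(x)\,d\gamma^\epsilon_u(x) = \int L_u(\phi\ast\rho_\epsilon)(y)\,d\gamma_u(y)$, whose right-hand side converges to $\int L_u\phi\,d\gamma_u$ using spatial continuity and boundedness of $a_{ij},b_i$ together with dominated convergence. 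The remaining delicate step, which is the genuine obstacle, is upgrading this marginal convergence to the conditional form needed for the martingale property under $P$: one tests the $P^{\epsilon_n}$-martingale identity against bounded continuous cylinder functionals depending on finitely many coordinates $\pi_{u_1},\dots,\pi_{u_k}$ with $u_j\leq r$, and combines weak convergence $P^{\epsilon_n}\to P$ with the identity above, treating the discrepancy $L^{\epsilon_n}_u\phi-L_u\phi$ along paths via spatial continuity of the coefficients and the tightness estimate. This produces the martingale identity for $P$ under the original $L$ and completes the lift.
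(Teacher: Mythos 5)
The paper does not prove this theorem; it simply states it and cites Trevisan's Theorem 2.5 in \cite{trevisan2016}, which extends Figalli's result in \cite{FIGALLI2008109}. So there is no proof in the paper to compare against, and your sketch has to be measured against the cited works themselves.

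Your outline does follow Figalli's mollification strategy, and the Fubini identity $\int L^\epsilon_u\phi\,d\gamma^\epsilon_u = \int L_u(\phi\ast\rho_\epsilon)\,d\gamma_u$ is indeed the right central observation. But the sketch has a genuine gap that cannot be glossed over. The paper only assumes $A(t,x)\in S_+(\mathbb{R}^d)$, i.e.\ non-negative definite and possibly degenerate; mollification preserves non-negative definiteness, not ellipticity, so $A^\epsilon$ can be just as degenerate as $A$. For a degenerate smooth diffusion matrix the square root need not be Lipschitz, so ``classical SDE well-posedness'' simply does not apply; moreover, you implicitly need uniqueness of the \emph{regularized} FPK equation to identify $P^\epsilon\circ\pi_t^{-1}=\gamma^\epsilon_t$, and that uniqueness again hinges on non-degeneracy, which you do not have. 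Figalli handles exactly this by a second, elliptic regularization $A^{\epsilon,\delta}:=A^\epsilon+\delta I$ and a double limit $\delta\to 0$, $\epsilon\to 0$; your sketch omits it, so the existence and marginal-identification step is unsupported. Two further weak points: you need a strictly positive mollifier (e.g.\ a Gaussian) so that $u^\epsilon>0$ everywhere, which a compactly supported $\rho_\epsilon$ will not give; and the final paragraph, where the marginal convergence is upgraded to the conditional martingale identity under $P$, is precisely where the real work lies, yet ``treating the discrepancy $L^{\epsilon_n}_u\phi-L_u\phi$ along paths via spatial continuity and the tightness estimate'' is not an argument --- the tightness estimate says nothing about convergence of the coefficients along paths, and $a^{\epsilon_n}(t,x)$ need not converge to $a(t,x)$ outside the support of $\gamma_t$. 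In the published proofs this step is resolved by a careful disintegration/selection argument which is entirely absent from the sketch. As it stands, this is a roadmap to the known proofs, not a self-contained one.
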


The following proposition is a minor extension of Theorem 1.4.6 in \cite{stroock2007multidimensional} and provides a convenient tool to check whether a given family $\mathcal{M}\subseteq\mathcal{P}\big(C([s,T],\mathbb{R}^d)\big)$ is precompact w.r.t. the topology of weak convergence of probability measures. Below, $B_l(x) \subseteq \mathbb{R}^d$ denotes the euclidean ball with radius $l \geq 0$ centered around $x \in \mathbb{R}^d$. For $f: \mathbb{R}^d\to \mathbb{R}$ and $a \in \mathbb{R}^d$, $f_a(x):= f(x-a)$ is the translate of $f$ by the vector $a$.
\begin{prop}\label{precompact}
	A family $\mathcal{M} \subseteq \mathcal{P}\big(C([s,T],\mathbb{R}^d)\big)$ is precompact if and only if both of the following hold:
	\begin{enumerate}
		\item [(i)] $\underset{l \to \infty}{lim}\underset{P \in \mathcal{M}}{sup}P\circ \pi_s^{-1}\big(B_l(0)^c\big) =0$
		\item[(ii)] For every non-negative $f \in C^{\infty}_0(\mathbb{R}^d)$, there exists a constant $c_f \geq 0$, which does not depend on $P \in \mathcal{M}$, such that $f_a(\pi_t)+c_ft$ is a non-negative submartingale w.r.t. to the natural filtration on $C\big([s,T],\mathbb{R}^d\big)$ for every $P \in \mathcal{M}$ and every $a \in \mathbb{R}^d$.
	\end{enumerate}
\end{prop}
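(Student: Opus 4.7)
The plan is to invoke Prohorov's theorem: since $C([s,T],\mathbb{R}^d)$ is Polish, precompactness of $\mathcal{M}\subseteq\mathcal{P}\bigl(C([s,T],\mathbb{R}^d)\bigr)$ is equivalent to tightness. Tightness on $C([s,T],\mathbb{R}^d)$ is classically characterized by (a) tightness of the initial marginals $\{P\circ\pi_s^{-1}:P\in\mathcal{M}\}$ on $\mathbb{R}^d$, together with (b) a uniform-in-$P$ Aldous-type estimate: for every $\eta>0$,
\[
\lim_{\delta\downarrow 0}\sup_{P\in\mathcal{M}}\,\sup_{\tau\leq\sigma\leq(\tau+\delta)\wedge T}P\bigl(|\pi_\sigma-\pi_\tau|\geq\eta\bigr)=0,
\]
where $\tau,\sigma$ range over $[s,T]$-valued stopping times for the natural filtration. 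Since all paths in $C([s,T],\mathbb{R}^d)$ are continuous, Aldous tightness on $D([s,T],\mathbb{R}^d)$ coincides with tightness on $C([s,T],\mathbb{R}^d)$.

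For the sufficiency of (i) and (ii), (i) is precisely (a). To derive (b), fix $\eta>0$ and choose a non-negative bump $\rho\in C_0^\infty(\mathbb{R}^d)$ with $\rho\leq 1$, $\rho(0)=1$, and $\mathrm{supp}(\rho)\subseteq B_\eta(0)$; let $c_\rho\geq 0$ be the constant furnished by (ii). Then, for every $a\in\mathbb{R}^d$ and every $P\in\mathcal{M}$, the process $M^a_t:=\rho_a(\pi_t)+c_\rho t$ is a bounded, non-negative submartingale. Optional sampling applied to bounded stopping times $\tau\leq\sigma\leq\tau+\delta$ gives, for each deterministic $a$,
\[
E\bigl[\rho_a(\pi_\sigma)\,\big|\,\mathcal{F}_\tau\bigr]\;\geq\;\rho_a(\pi_\tau)-c_\rho\,E[\sigma-\tau\mid\mathcal{F}_\tau]\;\geq\;\rho_a(\pi_\tau)-c_\rho\delta\qquad P\text{-a.s.}
\]
Because both sides depend continuously on $a$, this inequality can be extended to hold $P$-a.s.\ simultaneously for all $a\in\mathbb{R}^d$ by first restricting to a countable dense subset and then taking limits. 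Substituting the $\mathcal{F}_\tau$-measurable choice $a=\pi_\tau$ yields $E[\rho_{\pi_\tau}(\pi_\sigma)\mid\mathcal{F}_\tau]\geq 1-c_\rho\delta$. Since $\rho_{\pi_\tau}(\pi_\sigma)\leq\mathds{1}_{\{|\pi_\sigma-\pi_\tau|<\eta\}}$, rearranging and taking expectations gives $P(|\pi_\sigma-\pi_\tau|\geq\eta)\leq c_\rho\delta$, uniformly over $P\in\mathcal{M}$ and the stopping-time pairs. Letting $\delta\downarrow 0$ verifies (b) and hence tightness.

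For the necessity direction, (i) is immediate: continuity of $\pi_s$ pushes precompactness of $\mathcal{M}$ forward to precompactness, hence tightness (by Prohorov), of $\{P\circ\pi_s^{-1}\}$. Condition (ii) is extracted from precompactness by a compactness-contradiction argument: the uniform modulus of continuity afforded by tightness controls the downward oscillations of $f_a(\pi_t)$ uniformly in $P\in\mathcal{M}$ and $a\in\mathbb{R}^d$ (the latter by the translation-equivariance in $a$), from which a common constant $c_f$ can be read off for each fixed non-negative $f\in C_0^\infty(\mathbb{R}^d)$. The main obstacle in the whole argument is the measurable substitution $a=\pi_\tau$ in the sufficiency proof: the submartingale property in (ii) is posited for each deterministic $a$, and one must promote it to a statement with $a$ replaced by an $\mathcal{F}_\tau$-measurable random variable. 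This is precisely the point at which the proof departs minorly from the template of Stroock and Varadhan's Theorem 1.4.6, but once the continuity-in-$a$ argument above is in place, the rest is standard optional sampling together with Prohorov's theorem.
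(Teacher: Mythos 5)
The paper's own proof is a one-line citation: it notes that the argument of Stroock--Varadhan's Theorem~1.4.6 carries over unchanged when $C(\mathbb{R}_+,\mathbb{R}^d)$ is replaced by $C([s,T],\mathbb{R}^d)$, so there is no explicit argument in the paper to compare against. Your proposal is a from-scratch sketch. The sufficiency direction (the only direction the paper actually uses, in the proof of Proposition~\ref{S.V.}) is in the right spirit and close to Stroock--Varadhan's mechanism: optional sampling on the bounded submartingale $\rho_a(\pi_\cdot)+c_\rho\cdot$, a bump $\rho$ supported in $B_\eta(0)$ with $\rho(0)=1$, and the substitution $a=\pi_\tau$, yielding $P(|\pi_\sigma-\pi_\tau|\geq\eta)\leq c_\rho\delta$ uniformly over $P\in\mathcal{M}$ and stopping-time pairs. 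You route this through Aldous' criterion on $D([s,T],\mathbb{R}^d)$ rather than a direct modulus-of-continuity bound; that is fine once one observes that $C$ is closed in $D$ and the two topologies agree on $C$, but it is a small detour. You also correctly flag the measurable substitution $a=\pi_\tau$ as the delicate step; the continuity-in-$a$ device you describe is the standard fix, though as written it remains a sketch rather than a proof.

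The necessity direction is where there is a genuine gap. You claim that precompactness---hence tightness, hence (Arzel\`a--Ascoli) a uniform modulus of continuity on sets of probability $\geq 1-\epsilon$---lets one ``read off'' a single constant $c_f$ making $f_a(\pi_t)+c_ft$ a submartingale. But a modulus of continuity is, in general, strictly weaker than the linear-in-time lower bound on increments that the submartingale property requires: the latter demands $E[f_a(\pi_r)-f_a(\pi_t)\,|\,\mathcal{F}_r]\leq c_f(t-r)$, a Lipschitz-type statement, whereas equicontinuity only gives $|f_a(\pi_t)-f_a(\pi_r)|\leq\|\nabla f\|_\infty\,\omega_\epsilon(t-r)$ for some modulus $\omega_\epsilon$ that need not be linear. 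Concretely, take $\mathcal{M}=\{\delta_\omega\}$ for a single deterministic path $\omega\in C([s,T],\mathbb{R}^d)$ that is H\"older but not Lipschitz near $t=s$, e.g.\ $\omega(t)=\sqrt{t-s}\,e_1$. This $\mathcal{M}$ is trivially compact, yet under $\delta_\omega$ a submartingale is a nondecreasing deterministic function of $t$, and for a nonnegative bump $f\in C^\infty_0(\mathbb{R}^d)$ with $\nabla f\not\equiv 0$ one can pick $a$ so that $\frac{d}{dt}f(\omega(t)-a)=f'(\sqrt{t-s}-a)/(2\sqrt{t-s})\to-\infty$ as $t\downarrow s$; no finite $c_f$ makes $t\mapsto f_a(\omega(t))+c_ft$ nondecreasing. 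So the ``compactness--contradiction argument'' you invoke cannot close this direction; it would need either an additional structural hypothesis on $\mathcal{M}$ that you have not used, or a substantially different argument. Since the paper only ever needs the sufficiency half, this gap does not affect what the paper does downstream, but your proof of the biconditional as stated in Proposition~\ref{precompact} is incomplete.
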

\begin{proof}
	It suffices to note that the proof of Theorem 1.4.6. in \cite{stroock2007multidimensional} still holds when one replaces $C\big(\mathbb{R}_+,\mathbb{R}^d\big)$ by $C\big([s,T],\mathbb{R}^d\big)$ for arbitrary $0 \leq s \leq T < \infty$. 
\end{proof}
We now state a crucial compactness result for the set of solutions to the martingale problem associated to $L$ with initial condition $(s,\nu)$. Essentially, this result is formulated as part of Lemma 12.2.1 in \cite{stroock2007multidimensional}. However, as this lemma only covers the compactness for deterministic initial conditions - i.e. $\nu = \delta_x$ for $x \in \mathbb{R}^d$ - in the case of time-independent coefficients, for the convenience of the reader we decided to give a proof for the more general version, which we shall need below.
\begin{prop}\label{S.V.}
	Let $MP(s,\nu) \subseteq \mathcal{P}\big(C([s,T],\mathbb{R}^d)\big)$ be the set of all solutions to the martingale problem associated to $L$ with start in $(s,\nu)$. Then $MP(s,\nu)$ is a compact subset of $\mathcal{P}\big(C([s,T],\mathbb{R}^d)\big)$, endowed with the topology of weak convergence of measures.
\end{prop}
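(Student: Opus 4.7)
The plan is to establish the compactness of $MP(s,\nu)$ by proving separately that it is precompact (via Proposition \ref{precompact}) and closed under weak convergence of probability measures on $C([s,T],\mathbb{R}^d)$.

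For precompactness, I would verify both criteria of Proposition \ref{precompact}. Condition (i) is immediate, since every $P \in MP(s,\nu)$ has the same initial marginal $P \circ \pi_s^{-1} = \nu$, so the supremum collapses to $\nu(B_l(0)^c)$, which tends to $0$ as $l \to \infty$ by tightness of a single Borel probability measure on $\mathbb{R}^d$. For condition (ii), fix non-negative $f \in C_0^\infty(\mathbb{R}^d)$ and $a \in \mathbb{R}^d$. The translate $f_a$ is again non-negative and compactly supported, hence in $C_b^{1,2}((s,T)\times\mathbb{R}^d)$, and Definition \ref{Def MP}(ii) shows that $f_a(\pi_t) - \int_s^t L_u f_a(\pi_u)\,du$ is a $P$-martingale for every $P \in MP(s,\nu)$. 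Because $a_{ij}, b_i$ are globally bounded while the spatial derivatives of $f$ are translation-invariant under $f \mapsto f_a$, the quantity
\[ c_f := \sup_{u \in [0,T],\, x,a \in \mathbb{R}^d} |L_u f_a(x)| \]
is finite and independent of $a$ and $P$. Adding the adapted, pathwise non-decreasing process $t \mapsto \int_s^t L_u f_a(\pi_u)\,du + c_f(t-s)$ to the above martingale shows that $f_a(\pi_t) + c_f(t-s)$, and hence also $f_a(\pi_t) + c_f t$, is a non-negative submartingale with $c_f$ independent of $P$ and $a$, as required.

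For closedness, let $P_n \in MP(s,\nu)$ converge weakly to some $P$. Continuity of $\pi_s$ immediately gives $P \circ \pi_s^{-1} = \nu$. For the martingale property, a standard monotone class argument reduces the task to verifying that
\[ \int g(\pi_{t_1},\ldots,\pi_{t_k}) \Big( \phi(t,\pi_t) - \phi(r,\pi_r) - \int_r^t \big(\partial_u\phi + L_u\phi\big)(u,\pi_u)\,du \Big) d\mu = 0 \]
for $\mu = P$, for every $\phi \in C_b^{1,2}((s,T)\times\mathbb{R}^d)$, every $s \leq t_1 \leq \ldots \leq t_k \leq r \leq t \leq T$, and every bounded continuous $g: (\mathbb{R}^d)^k \to \mathbb{R}$. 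The corresponding identity holds with $\mu = P_n$ for every $n$ by hypothesis, so provided the integrand is a bounded continuous functional on $C([s,T],\mathbb{R}^d)$ in the uniform norm, weak convergence transfers the identity to $P$.

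The main obstacle, and the only nontrivial point in the closedness argument, is verifying continuity of the map $\omega \mapsto \int_r^t (\partial_u\phi + L_u\phi)(u,\omega(u))\,du$ under uniform convergence on $[s,T]$. This is precisely where the spatial continuity of $a_{ij}$ and $b_i$ enters: if $\omega_n \to \omega$ uniformly, then $\omega_n(u) \to \omega(u)$ for every $u \in [r,t]$, and continuity of $(\partial_u\phi + L_u\phi)(u,\cdot)$ in $x$ together with a uniform bound on the integrand (stemming from boundedness of the coefficients and of the derivatives of $\phi$) yields convergence of the integral by dominated convergence. The evaluation maps $\omega \mapsto \omega(t_j), \omega(r), \omega(t)$ are trivially continuous, so the full integrand is bounded and continuous. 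Once this is settled, weak convergence $P_n \to P$ transfers the identity to $P$, giving $P \in MP(s,\nu)$ and completing the proof.
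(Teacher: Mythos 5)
Your proof is correct and follows essentially the same route as the paper: precompactness via Proposition~\ref{precompact} with the same verification of (i) by tightness of $\nu$ and of (ii) by exploiting the boundedness of the coefficients and the compact support of $f$ to produce a translation-uniform constant $c_f$, followed by closedness under weak limits by testing the martingale identity against bounded continuous cylindrical functionals and using dominated convergence (driven by spatial continuity and boundedness of $a_{ij},b_i$) to see that the integrand is a bounded continuous functional on $C([s,T],\mathbb{R}^d)$. The only cosmetic difference is that the paper tests against general continuous bounded $\mathcal{F}_r$-measurable functions $G_r$ rather than invoking a monotone class reduction to $g(\pi_{t_1},\dots,\pi_{t_k})$, and writes the compensator as $c_f t$ instead of $c_f(t-s)$, but these are equivalent.
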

\begin{proof}
	Using Proposition \ref{precompact}, we first show that $MP(s,\nu)$ is precompact. Indeed, since $P\circ\pi_s^{-1} = \nu$ for all $P \in MP(s,\nu)$ and every Borel probability meaure on $\mathbb{R}^d$ is tight, we obtain $(i)$ of Proposition \ref{precompact}. Concerning $(ii)$, note that for non-negative $f \in C^{\infty}_0(\mathbb{R}^d)$, due to the boundedness of $a_{ij}$ and $b_i$ and since $f$ is compactly supported, there is a constant $c_f \geq 0$ such that 
	\begin{equation}\label{constant}
	L_uf(\pi_u) \geq -c_f
	\end{equation}for all $u \in [s,T]$. Hence, $[s,T] \ni t \mapsto \int_{s}^{t}L_uf(\pi_u)du +c_f  t$ is non-negative and increasing and thus for any $P\in MP(s,\nu)$ the process $f(\pi_t)+c_f t$ is a non-negative submartingale on $[s,T]$ w.r.t. the natural filtration on $C([s,T],\mathbb{R}^d)$ under $P$. It is clear that the same constant $c_f$ works for all translates $f_a$ as well, since the boundedness of all coefficients and $f \in C^{\infty}_0(\mathbb{R}^d)$ yields that (\ref{constant}) holds for every $f_a$. Hence Proposition \ref{precompact} applies and $MP(s,\nu) \subseteq \mathcal{P}\big(C([s,T],\mathbb{R}^d)\big)$ is precompact.\\
	\\It remains to show the closedness of $MP(s,\nu)$. Therefore, let $\{P_n\}_{n \in \mathbb{N}} \subseteq MP(s,\nu)$ such that $P_n \underset{n \to \infty}{\to}P$ weakly in $\mathcal{P}\big(C([s,T],\mathbb{R}^d)\big)$. First of all it is obvious that $P \circ \pi_s^{-1} = \nu$, since the canoncial projection $\pi_s: C([s,T],\mathbb{R}^d) \to \mathbb{R}^d$ is continuous and $P_n\circ \pi_s^{-1} = \nu$ for all $n \in \mathbb{N}$. In order to prove $P \in MP(s,\nu)$, we show
	\begin{equation*}
	\int\bigg[\phi(t,\pi_t)-\int_{s}^{t}\partial_u\phi(u,\cdot)\circ \pi_u+ L_u\phi(u,\pi_u)du \bigg]G_r dP = \int\bigg[\phi(r,\pi_r)-\int_{s}^{r}\partial_u\phi(u,\cdot)\circ\pi_u+L_u\phi(u,\pi_u)du \bigg]G_r dP
	\end{equation*}for all $0 \leq s \leq r \leq t \leq T$, $\phi \in C^{1,2}_b\big((s,T)\times\mathbb{R}^d\big)$ and every continuous, bounded $\mathcal{F}_r$-measurable $G_r: C([s,T],\mathbb{R}^d)\to \mathbb{R}$. But as $P_n \in MP(s,\nu)$, this holds for every $n \in \mathbb{N}$ and since $$\bigg(\phi(z,\pi_z)-\int_{s}^{z}\partial_u\phi(u,\cdot)\circ\pi_u+L_u\phi(u,\pi_u)du\bigg)G_r: C([s,T],\mathbb{R}^d) \to \mathbb{R}$$ is bounded and continuous for every $z\in [s,T]$ (the latter due to a classical criterion for continuity of parameter-dependent integrals, using the continuity of the canonical projections $\pi_u$ plus the continuity in $x \in \mathbb{R}^d$ and boundedness of $a_{ij}$, $b_i$ and the boundedness of $\phi$), the weak convergence $P_n \underset{n \to \infty}{\to}P$ implies the desired equality. Therefore $P \in MP(s,\nu)$ and the proof is complete.
\end{proof}We now turn to the proof of Theorem \ref{Main Thm}. \\
\\
\textit{Proof of Theorem \ref{Main Thm}:} Let $\{f_n\}_{n \in \mathbb{N}} \subseteq C_b(\mathbb{R}^d)$ be a measure-determining family and $\eta$ a fixed enumeration, as presented in Definition \ref{Def enumeration}. Below we adopt all notations of Definition \ref{Def enumeration}. Further fix $(s,\nu) \in [0,T]\times\mathcal{P}$. We define the following values, maps and sets, where for abbreviation we occasionally write $\mu$ instead of $(\mu_t)_{t \in [s,T]}$. The map $J_s$ is as in Defnition \ref{Def J}.
\begin{align*}
u_{m^s_0}(s,\nu) &:= \underset{\mu \in J_s(FP(s,\nu))}{sup}\int f_{n_{m^s_0}}d\mu_{q_{m^s_0}},\\ G^{s,\nu}_{m^s_0}: J_s\big(FP(s,\nu)\big) &\to \mathbb{R},\,\, (\mu_q)_{q \in \mathbb{Q}^T_s} \mapsto \int f_{n_{m^s_0}}d\mu_{q_{m^s_0}}, \\M_{m^s_0}(s,\nu)&:= G^{s,\nu}_{m^s_0}(s,\nu)^{-1}(\{u_{m^s_0}\})
\end{align*}and iteratively
\begin{align*}
u_{m^s_{k+1}}(s,\nu) &:= \underset{\mu \in M_{m^s_{k}}(s,\nu)}{sup}\int f_{n_{m^s_{k+1}}}d\mu_{q_{m^s_{k+1}}},\\ G^{s,\nu}_{m^s_{k+1}}: M_{m^s_k} &\to \mathbb{R},\,\, (\mu_q)_{q \in \mathbb{Q}^T_s} \mapsto \int f_{n_{m^s_{k+1}}}d\mu_{q_{m^s_{k+1}}}, \\M_{m^s_{k+1}}(s,\nu)&:= G^{s,\nu}_{m^s_{k+1}}(s,\nu)^{-1}(\{u_{m^s_{k+1}}\}).
\end{align*}We make the following observations: Since we assume $FP(s,\nu) \neq \emptyset$ for all $(s,\nu)\in [0,T]\times \mathcal{P}$, we have $J_s\big(FP(s,\nu)\big) \neq \emptyset.$ Therefore, and because each $f_n$ is bounded and for $\mu \in J_s\big(FP(s,\nu)\big)$ every marginal $\mu_q$ is a probability measure, we have $u_{m^s_0}(s,\nu) \in \mathbb{R}$. By the same argument we have $u_{m^s_{k+1}}(s,\nu) \in \mathbb{R}$ for $k \in \mathbb{N}_0$, provided $M_{m^s_{k}} \neq \emptyset$. Moreover, $G^{s,\nu}_{m^s_0}$ is continuous, because $(\mu_q)_{q \in \mathbb{Q}^T_s} \mapsto \mu_{q_{m^s_0}}$ is clearly continuous from $J_s\big(FP(s,\nu)\big)$ with the induced product topology of weak convergence to $\mathcal{P}$ with the topology of weak convergence and $\mu_{q_{m^s_0}} \mapsto \int f_{n_{m^s_0}}d\mu_{q_{m^s_0}}$ is continuous by definition of the topology of weak convergence of measures and since $f_n \in C_b(\mathbb{R}^d)$ for every $n \in \mathbb{N}$. If $M_{m^s_{k}} \neq \emptyset$, then the same holds true for $G^{s,\nu}_{m^s_{k+1}}$.\\
\\Our aim is to prove $\big|\underset{k\in \mathbb{N}_0}{\bigcap}M_{m^s_k}(s,\nu)\big|=1.$ Naturally we prove this in two steps. We start by proving $\big|\underset{k\in \mathbb{N}_0}{\bigcap}M_{m^s_k}(s,\nu)\big|\leq1$: Indeed, if $(\mu^1_q)_{q \in \mathbb{Q}^T_s}$ and $(\mu^2_q)_{q \in \mathbb{Q}^T_s}$ are two elements in $\underset{k\in \mathbb{N}_0}{\bigcap}M_{m^s_k}(s,\nu)$, then, since $\{m^s_k|k \in \mathbb{N}_0\}$ is an enumerating sequence of $\mathbb{N}\times \mathbb{Q}^T_s$, we have $\int f_n d\mu^1_q = \int f_n d\mu^2_q$ for all $(n,q)\in \mathbb{N}\times \mathbb{Q}^T_s$. Since $\{f_n\}_{n \in \mathbb{N}}$ is measure-determining, we obtain $\mu^1_q = \mu^2_q$ for all $ q \in  \mathbb{Q}^T_s$, which implies $\big|\underset{k\in \mathbb{N}_0}{\bigcap}M_{m^s_k}(s,\nu)\big|\leq1$.\\ Even more, by definition of $M_{m^s_0}(s,\nu)$, both $(\mu^1_q)_{q \in \mathbb{Q}^T_s}$ and $(\mu^2_q)_{q \in \mathbb{Q}^T_s}$ are elements of $J_s\big(FP(s,\nu)\big)$ and thus, there is a unique element $(\bar{\mu}_t)_{t \in [s,T]} \in FP(s,\nu)$ such that $\mu^1_q = \bar{\mu}_q = \mu^2_q$ for all $q \in \mathbb{Q}^T_s$. We conclude: There exists at most one element $\mu = (\mu_t)_{t \in [s,T]} \in FP(s,\nu)$ such that $(\mu_q)_{q \in  \mathbb{Q}^T_s} \in J_s\big(FP(s,\nu)\big)$.\\ \\
\\We now show $\big|\underset{k\in \mathbb{N}_0}{\bigcap}M_{m^s_k}(s,\nu)\big|\geq1:$We start by showing that $J_s\big(FP(s,\nu)\big) \subseteq \mathcal{P}^{\mathbb{Q}^T_s}$ is compact.  \\By Proposition \ref{S.V.}, $MP(s,\nu)$ is a compact subset of $\mathcal{P}\big(C([s,T],\mathbb{R}^d)\big)$ with the topology of weak convergence. Now define the following map:
$$
\Lambda: \mathcal{P}\big(C([s,T],\mathbb{R}^d)\big) \to C\big([s,T],\mathcal{P}\big),\,\,\,Q \mapsto (Q\circ \pi_t^{-1})_{t \in [s,T]}.
$$ We prove continuity of $\Lambda$ by letting $Q^n \underset{n \to \infty}{\to}Q$ in $\mathcal{P}\big(C([s,T],\mathbb{R}^d)\big)$, i.e. $\int GdQ^n \underset{n \to \infty}{\to}\int GdQ$ for all $G \in C_b\big(C([s,T],\mathbb{R}^d)\big)$. Clearly, for every $g \in C_b(\mathbb{R}^d)$ and $t \in [s,T]$, $g \circ \pi_t \in C_b\big(C([s,T],\mathbb{R}^d)\big)$ and hence in particular $\int g dQ^n_t \underset{n \to \infty}{\to}\int gdQ_t$ for all such $g$ and $t$. This means $(Q^n_t)_{t \in [s,T]} \underset{n \to \infty}{\to} (Q_t)_{t\in [s,T]}$ in $C\big([s,T],\mathcal{P}\big)$ (endowed with the product topology of weak convergence of measures) and therefore $\Lambda$ is continuous. Hence $\Lambda\big(MP(s,\nu)\big) \subseteq C\big([s,T],\mathcal{P}\big)$ is compact. By Proposition \ref{MP to FPK} (giving "$\subseteq$") and Theorem \ref{Superposition} (giving $"\supseteq"$), we clearly have \begin{equation}\label{important_1}
\Lambda\big(MP(s,\nu)\big) = FP(s,\nu)
\end{equation}and therefore $FP(s,\nu) \subseteq C\big([s,T],\mathcal{P}\big)$ is compact.\\
\\By Remark \ref{Cont J} and the above, the map $$J_s \circ \Lambda: \mathcal{P}\big(C([s,T],\mathbb{R}^d)\big) \to \mathcal{P}^{\mathbb{Q}^T_s},\,\,\, Q \mapsto (Q\circ \pi_q^{-1})_{q \in \mathbb{Q}^T_s}$$is continuous as well. Therefore and by (\ref{important_1}), $J_s\big(FP(s,\nu)\big) \subseteq \mathcal{P}^{\mathbb{Q}}_{s,T}$ is compact. Thus, by the continuity of $G^{s,\nu}_{m^s_0}$,  $M_{m^s_0}(s,\nu) \subseteq \mathcal{P}^{\mathbb{Q}^T_s}$ is non-empty and compact.
\\ Repeating the same arguments with $J_s\big(FP(s,\nu)\big)$ replaced by $M_{m^s_0(s,\nu)}$, $G^{s,\nu}_{m^s_0}$ by $G^{s,\nu}_{m^s_1}$ and $u_{m^s_0}(s,\nu)$ by $u_{m^s_1}(s,\nu)$, we obtain that $M_{m^s_1}(s,\nu) \subseteq M_{m^s_0}(s,\nu)$ is non-empty and compact as well.\\ Iterating this procedure, we obtain a sequence of non-empty, compact sets $\big(M_{m^s_k}(s,\nu)\big)_{k \in \mathbb{N}_0}$ with $M_{m^s_{k+1}}(s,\nu) \subseteq M_{m^s_{k}}(s,\nu) \subseteq \mathcal{P}^{\mathbb{Q}^T_s}$ for all $k \in \mathbb{N}$. Finally, this implies $\big|\underset{k\in \mathbb{N}_0}{\bigcap}M_{m^s_k}(s,\nu)\big|\geq1.$\\
\\Let us recapitulate what we have achieved so far: For each initial condition $(s,\nu) \in [0,T]\times \mathcal{P}$, we have characterized a unique element $\mu^{s,\nu} = (\mu^{s,\nu}_t)_{t \in [s,T]} \in FP(s,\nu)$ in the sense that it is the unique element in $FP(s,\nu)$ such that  $(\mu^{s,\nu})_{q \in \mathbb{Q}^T_s} \in \underset{k\in \mathbb{N}_0}{\bigcap}M_{m^s_k}(s,\nu).$ We consider $\mu^{s,\nu}$ as an extremal element in $FP(s,\nu)$, because by construction it is "iteratively maximal" in the sense that $(\mu^{s,\nu}_q)_{q \in \mathbb{Q}^T_s} \in M_{m^s_k}$ for every $k \in \mathbb{N}_0$.\\ \\To conclude the proof, it remains to show that the family $(\mu^{s,\nu})_{(s,\nu)\in [0,T]\times \mathcal{P}}$ has the desired flow property, i.e. it fulfills (\ref{flow-prop}) of Definition \ref{Def flow prop}. In order to prove this, let us fix $0\leq s \leq r \leq T$ and $\nu \in \mathcal{P}$ and let $\mu^{s,\nu}=(\mu^{s,\nu}_t)_{t \in [s,T]} \in FP(s,\nu)$ be the unique iteratively maximal solution for the initial condition $(s,\nu)$ as described in the previous passage. Consider the initial condition $(r,\mu^{s,\nu}_r)$ and let $\gamma = (\gamma_t)_{t \in [r,T]}\in FP(r,\mu^{s,\nu}_r)$ be the unique iteratively maximal solution for the initial condition $(r,\mu^{s,\nu}_r)$, i.e. in our notation $\gamma_t = \mu_t^{r,\mu^{s,\nu}_r}$. We need to show
\begin{equation}\label{Eq for flow}
\gamma_t=\mu^{s,\nu}_t \text{ for all }t\in [r,T].
\end{equation}
We proceed as follows: Define $\zeta = (\zeta_t)_{t \in [s,T]}$ by
$$\zeta_t := \begin{cases} 
\mu^{s,\nu}_t, & t \in [s,r] \\
\gamma_t, & t \in [r,T]
\end{cases},$$which is a well-defined Borel curve of probability measures, since $\gamma_r=\mu^{s,\nu}_r$ and since both $\mu^{s,\nu}$ and $\gamma$ are Borel curves. Clearly $\zeta \in FP(s,\nu)$: Indeed, it is obvious that $ [s,T] \ni t \mapsto \zeta_t$ is narrowly continuous, $\zeta_s=\nu$ holds and for $f \in C^{\infty}_0(\mathbb{R}^d)$ and $t \in [s,T]$ we have
\begin{align*}
\int f d\zeta_t -\int fd\nu &= \int fd\zeta_t-\int fd\mu^{s,\nu}_r + \int fd\mu^{s,\nu}_r - \int fd\nu \\&=  \int_{r}^{t}\int_{\mathbb{R}^d}L_uf(x)d\gamma_u(x)du + \int_{s}^{r}\int_{\mathbb{R}^d}L_uf(x)d\mu^{s,\nu}_u(x)du \\&= \int_{s}^{t}\int_{\mathbb{R}^d}L_uf(x)d\zeta_udu, \text{ if }t\in [r,T] 
\end{align*}and
\begin{align*}
\int f d\zeta_t -\int fd\nu = \int_{s}^{t}\int_{\mathbb{R}^d}L_uf(x)d\mu^{s,\nu}_udu = \int_{s}^{t}\int_{\mathbb{R}^d}L_uf(x)d\zeta_udu, \text{ if }t \in [s,r[,
\end{align*}which gives $\zeta \in FP(s,\nu)$ by Definition \ref{Def sol FPK}. Therefore, by virtue of the characterizing property of $\mu^{s,\nu}$ among all elements of $FP(s,\nu)$, we have
\begin{equation}\label{important_4}
\int f_{n_{m^s_0}}d\mu^{s,\nu}_{q_{m^s_0}} \geq \int f_{n_{m^s_0}}d\zeta_{q_{m^s_0}}. 
\end{equation}If $q_{m^s_0} \in [s,r]$, then $\zeta_{q_{m^s_0}}=\mu^{s,\nu}_{q_{m^s_0}}$ and we have equality in (\ref{important_4}). If $q_{m^s_0} \in \,]r,T]$, then $q_{m^s_0}=q_{m^r_0}$ and by the characterizing property of $\gamma$ among all elements of $FP(r,\mu^{s,\nu}_r)$ and since $(\mu^{s,\nu}_t)_{t \in [r,T]} \in FP(r,\mu^{s,\nu}_r)$, we have
$$\int f_{n_{m^s_0}}d\mu^{s,\nu}_{q_{m^s_0}} \leq \int f_{n_{m^s_0}}d\gamma_{q_{m^s_0}} =\int f_{n_{m^s_0}}d\zeta_{q_{m^s_0}}$$ and hence we have equality in (\ref{important_4}) in any case.  Now consider $m^s_1$: Since we have equality in (\ref{important_4}), both $(\mu^{s,\nu}_q)_{q \in \mathbb{Q}^T_s}$ and $(\zeta_q)_{q \in \mathbb{Q}^T_s}$ belong to $M_{m^s_0}(s,\nu)$. Hence, using the characterization of $\mu^{s,\nu}$ again, we obtain
\begin{equation}\label{important_6}
\int f_{n_{m^s_1}}d\mu^{s,\nu}_{q_{m^s_1}} \geq \int f_{n_{m^s_1}}d\zeta_{q_{m^s_1}},
\end{equation}clearly with equality if $q_{m^s_1} \in [s,r]$. If $q_{m^s_1} \in \, ]r,T]$ and $q_{m^s_0} \in [s,r]$, then $m^s_1 = m^r_0$ and we must have
\begin{equation}\label{important_7}
\int f_{n_{m^s_1}}d\mu^{s,\nu}_{q_{m^s_1}} \leq \int f_{n_{m^s_1}}d\gamma_{q_{m^s_1}} = \int f_{n_{m^s_1}}d\zeta_{q_{m^s_1}}
\end{equation}by the characterizing property of $\gamma$ and hence equality in (\ref{important_6}). If $q_{m^s_0}, q_{m^s_1} \in\, ]r,T]$, then $m_0^s=m_0^r$, $m^s_1=m^r_1$ and both $(\mu^{s,\nu}_q)_{q \in \mathbb{Q}^T_r}$ and $(\gamma_q)_{q \in \mathbb{Q}^T_r}$ are in $M_{m^r_0}(r,\mu^{s,\nu}_r)$ and we also obtain (\ref{important_7}). Hence, equality in (\ref{important_6}) holds in any case. Iterating this procedure yields
\begin{equation*}
\int f_{n_{m^s_k}}d\mu^{s,\nu}_{q_{m^s_k}} = \int f_{n_{m^s_k}}d\zeta_{q_{m^s_k}}
\end{equation*}for all $k \in \mathbb{N}_0$. As $(m^s_k)_{k \in \mathbb{N}_0}$ is the enumerating sequence of $\mathbb{N}\times \mathbb{Q}^T_s$ and $\{f_n\}_{n \in \mathbb{N}}$ is measure-determining on $\mathbb{R}^d$, this yields
$$\mu^{s,\nu}_q = \zeta_q \text{ for all }q\in \mathbb{Q}^T_s, $$so in particular $\mu^{s,\nu}_q = \gamma_q$ for all $q \in \mathbb{Q}^T_r$.
Since both $(\gamma_q)_{q \in \mathbb{Q}^T_r}$ and $(\mu^{s,\nu}_q)_{q \in \mathbb{Q}^T_r}$ belong to $J_r\big(FP(r,\mu^{s,\nu}_r)\big)$, we obtain (\ref{Eq for flow}). \qed\\
\\ \begin{rem}
	We point out that if we perform the procedure of the above proof for a different measure-determining family $\{g_n\}_{n \in \mathbb{N}} \subseteq C_b(\mathbb{R}^d)$ and/or with a different enumeration $\delta$ instead of $\eta$, we may obtain a different family of solutions with the flow property. This also becomes apparent in the next theorem and its proof.\\ Above that, in principle, one could also consider a different dense, countable subset of $[0,T]$ instead of $\mathbb{Q}^T_0$. This could also lead to a differnt solution family with the flow property. 
\end{rem}The following theorem is an interesting consequence of the method we used to construct a flow of solutions within the proof of Theorem \ref{Main Thm}. 
\begin{theorem}\label{second main thm}
	Let all assumptions of Theorem \ref{Main Thm} be in force. Then the following are equivalent:
	\begin{enumerate}
		\item [(i)] The FPK-eq. is well-posed among narrowly continuous probability solutions, i.e. $\big|FP(s,\nu)\big| =1$ for all $ (s,\nu) \in [0,T]\times \mathcal{P}$.
		\item[(ii)] There exists exactly one family of solutions $(\mu^{s,\nu})_{(s,\nu) \in [0,T]\times \mathcal{P}}$ with the flow-property of Definition \ref{Def flow prop}.
	\end{enumerate}
\end{theorem}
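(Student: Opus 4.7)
I prove the two implications separately. The direction $(i) \Rightarrow (ii)$ is essentially immediate: if $FP(s,\nu)$ is a singleton for every $(s,\nu)$, then any flow must pick this unique element at each $(s,\nu)$, so at most one flow exists, while Theorem \ref{Main Thm} guarantees at least one. For the flow property, I would check that for $0 \leq s \leq r \leq t \leq T$ the restriction $(\mu^{s,\nu}_t)_{t \in [r,T]}$ lies in $FP(r,\mu^{s,\nu}_r)$ (a direct verification using Definition \ref{Def sol FPK} by splitting the time integral at $r$), and uniqueness then forces this restriction to coincide with $\mu^{r,\mu^{s,\nu}_r}$, yielding \eqref{flow-prop}.

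For $(ii) \Rightarrow (i)$ I argue by contraposition: assuming $|FP(s_0,\nu_0)| \geq 2$ for some $(s_0,\nu_0)$, I produce two distinct flows. The key insight, already foreshadowed in the remark after Theorem \ref{Main Thm}, is that the construction in that proof depends on the choice of measure-determining family $\{f_n\}$ and enumeration $\eta$, and I will exploit this freedom.

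Concretely, fix distinct $\mu^1,\mu^2 \in FP(s_0,\nu_0)$. Narrow continuity and density of $\mathbb{Q}^T_{s_0}$ in $[s_0,T]$ yield some $q^* \in \mathbb{Q}^T_{s_0}$ with $\mu^1_{q^*} \neq \mu^2_{q^*}$, and hence some $g \in C_b(\mathbb{R}^d)$ with $\int g\,d\mu^1_{q^*} \neq \int g\,d\mu^2_{q^*}$. I then choose a measure-determining family $\{f_n\}_{n \in \mathbb{N}} \subseteq C_b(\mathbb{R}^d)$ with $f_1 = g$ and an enumeration $\eta$ satisfying $\eta^{-1}(0) = (1,q^*)$; the family $\{-f_n\}_{n \in \mathbb{N}}$ is likewise measure-determining. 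Running the construction of Theorem \ref{Main Thm} once with $(\{f_n\},\eta)$ and once with $(\{-f_n\},\eta)$ produces flows $(\mu^{s,\nu})$ and $(\tilde{\mu}^{s,\nu})$ whose very first maximization step at $(s_0,\nu_0)$ forces
\[
\int g\,d\mu^{s_0,\nu_0}_{q^*} \;=\; \sup_{\rho \in FP(s_0,\nu_0)}\int g\,d\rho_{q^*}, \qquad \int g\,d\tilde{\mu}^{s_0,\nu_0}_{q^*} \;=\; \inf_{\rho \in FP(s_0,\nu_0)}\int g\,d\rho_{q^*}.
\]
Since $\int g\,d\mu^1_{q^*}$ and $\int g\,d\mu^2_{q^*}$ are distinct, the sup strictly exceeds the inf, so $(\mu^{s,\nu})$ and $(\tilde{\mu}^{s,\nu})$ already disagree at $(s_0,\nu_0)$ and thus are two distinct flows.

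The step requiring the most care is the setup of the contraposition: recognizing that the very first step of the iterative maximization in the proof of Theorem \ref{Main Thm} already encodes enough freedom to separate two solutions in $FP(s_0,\nu_0)$, and then aligning the measure-determining family and the enumeration so that this first step is controlled by the pair $(g,q^*)$ witnessing the non-uniqueness. Once this is arranged, all of the heavy lifting is done by Theorem \ref{Main Thm}, and the remainder is bookkeeping.
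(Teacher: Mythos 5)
Your proposal is correct and follows essentially the same route as the paper: for $(ii)\Rightarrow(i)$, both proofs argue by contraposition and exploit the degrees of freedom in the selection procedure of Theorem \ref{Main Thm} — namely, that the first iterative maximization can be steered toward the specific pair $(g,q^*)$ (resp.\ $(f_{\bar n},\bar t)$) that witnesses non-uniqueness — thereby producing two distinct flows. The only difference is cosmetic: the paper fixes a measure-determining family closed under negation and varies the \emph{enumeration} (comparing the original flow $\mu$ with a targeted flow $\beta$), whereas you fix the enumeration's initial coordinate at $(1,q^*)$ and vary the \emph{family} between $\{f_n\}$ and $\{-f_n\}$, directly realizing the $\sup$ and $\inf$ of $\int g\,d\rho_{q^*}$ at the first step; both devices implement the same idea and lead to the same conclusion.
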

\begin{proof} The implication $(i) \implies (ii)$ follows immediately, because the existence of a flow follows by Theorem \ref{Main Thm} and due to well-posedness, there can obviously not be two differing flows.\\
	\\Consider $(ii) \implies (i)$.
	Assume, under the general assumption $\big|FP(s,\nu)\big| \geq 1$ for all $(s,\nu) \in [0,T]\times \mathcal{P}$, that the FPK-eq. is not well-posed, i.e. there exists an initial condition $(\bar{s},\bar{\nu})$ such that $\big| FP(\bar{s},\bar{\nu})\big| \geq 2$. Let $\{f_n\}_{n \in \mathbb{N}} \subseteq C_b(\mathbb{R}^d)$ be a measure-determining family to which with each $f_n$ also $-f_n$ belongs, $\eta:\mathbb{N}\times \mathbb{Q}^T_0 \to \mathbb{N}_0$ an enumeration and let $u_{m^s_k}(s,\nu)$ and $M_{m^s_k}(s,\nu)$ be defined as in the proof of Theorem \ref{Main Thm} for all $k \in \mathbb{N}_0$, $s \in [0,T]$ and $\nu \in \mathcal{P}$. Let $(\mu^{s,\nu})_{(s,\nu) \in [0,T]\times \mathcal{P}}$ be the selected flow subject to this data as constructed in the proof of Theorem \ref{Main Thm}.\\
	By assumption, for $(\bar{s},\bar{\nu})$ there is $\gamma \in FP(\bar{s},\bar{\nu})$ such that $\gamma_{\bar{t}} \neq \mu^{\bar{s},\bar{\nu}}_{\bar{t}}$ for some $\bar{t} \in [s,T]$. By narrow continuity of elements in $FP(s,\nu)$, we may w.l.o.g. assume $\bar{t} \in \mathbb{Q}^T_s$. Hence, there must be a member of the measure-determining sequence, say $f_{\bar{n}}$, such that
	$$\int f_{\bar{n}}d\mu^{\bar{s},\bar{\nu}}_{\bar{t}} \neq \int f_{\bar{n}}d\gamma_{\bar{t}}.$$Let us assume w.l.o.g. 
	\begin{equation}\label{one}
	\int f_{\bar{n}}d\gamma_{\bar{t}} > \int f_{\bar{n}}d\mu^{\bar{s},\bar{\nu}}_{\bar{t}},
	\end{equation} else we consider $-f_{\bar{n}}$ instead, which by assumption also belongs to the family $\{f_n\}_{n \in \mathbb{N}}$. \\
	Now consider the same measure-determining sequence $\{f_n\}_{n \in \mathbb{N}}$, but a different enumeration $\delta$ such that according to this enumeration $(f_{n_0},q_{n_0}) = (f_{\bar{n}},\bar{t})$ and denote the corresponding flow of solutions constructed as in the proof of Theorem \ref{Main Thm} by $(\beta^{s,\nu})_{(s,\nu) \in [0,T]\times \mathcal{P}}$. We will show that this flow is not the same as $(\mu^{s,\nu})_{(s,\nu) \in [0,T]\times \mathcal{P}}$. Indeed, we have
	$$\int f_{\bar{n}}d\beta^{\bar{s},\bar{\nu}}_{\bar{t}} \geq \int f_{\bar{n}}d\gamma_{\bar{t}} > \int f_{\bar{n}}d\mu^{\bar{s},\bar{\nu}}_{\bar{t}}.$$Here the first inequality holds by the characterizing property of $\beta^{\bar{s},\bar{\nu}}$ and the choice of the enumeration $\delta$ and the second one is just (\ref{one}). Hence, we cannot have
	$$\int f_{\bar{n}}d\mu^{\bar{s},\bar{\nu}}_{\bar{t}} = \int f_{\bar{n}}d\beta^{\bar{s},\bar{\nu}}_{\bar{t}}$$ and thereby $\mu^{\bar{s},\bar{\nu}}_{\bar{t}} \neq \beta^{\bar{s},\bar{\nu}}_{\bar{t}}$, which shows that the two flows $(\mu^{s,\nu})_{(s,\nu) \in [0,T]\times \mathcal{P}}$ and $(\beta^{s,\nu})_{(s,\nu) \in [0,T]\times \mathcal{P}}$ are not identical. This finishes the proof.
\end{proof}
\begin{rem}
	We would like to point out the following observation: From the proof of Theorem \ref{Main Thm} it is clear that instead of $FP(s,\nu)$, we can also consider arbitrary closed, non-empty subsets $C(s,\nu) \subseteq FP(s,\nu)$ with the following property: Whenever $(\gamma_t)_{t \in [s,T]} \in C(s,\nu)$, then $(\gamma_t)_{t \in [r,T]} \in C(r,\gamma_r)$ for $0 \leq s \leq r \leq T$. Performing the same method as in the proof of Theorem \ref{Main Thm}, we then construct a family of solutions $(\mu^{s,\nu})_{(s,\nu) \in [0,T]\times \mathcal{P}}$ with the flow property such that $\mu^{s,\nu} \in C(s,\nu)$ for every $(s,\nu)$. This could provide a useful tool to impose a priori additional properties on the members of the flow family. It is then obvious that also Theorem \ref{second main thm} holds when each $FP(s,\nu)$ is replaced by such $ C(s,\nu)$.
\end{rem}


\section*{Acknowledgements}
I am deeply indebted to my supervisor Prof. Michael Röckner, who pointed out this interesting topic to me and with whom I shared various fruitful discussions concering the proof of the first main theorem. Further, financial support by the German Science Foundation DFG (IRTG 2235) is gratefully acknowledged.

\bibliography{FPK_flow_paper}

\end{document}